\theoremstyle{plain}
\newtheorem{theorem}{Theorem}
\newtheorem{lemma}[theorem]{Lemma}
\newtheorem{definition}[theorem]{Definition}
\newtheorem{proposition}[theorem]{Proposition}
\newtheorem{corollary}[theorem]{Corollary}
\theoremstyle{remark}
\newtheorem{example}[theorem]{Example}
\newtheorem*{acknowledgment}{Acknowledgment}
\numberwithin{equation}{section}
\numberwithin{theorem}{section}
\newcommand{\ldiv}{\backslash}
\newcommand{\rdiv}{/}
\newcommand{\Aut}{\mathrm{Aut}}
\newcommand{\Mlt}{\mathrm{Mlt}}
\newcommand{\Inn}{\mathrm{Inn}}
\newcommand{\chr}{\mathrm{char}}
\title{Complete Graph Decompositions and P-Groupoids}
\author[J. Carr]{John Carr${}^*$}
\address{Department of Mathematics and Statistics\\
Auburn University\\
Auburn, AL 36849 USA}
\email{\url{jac0017@auburn.edu}}
\author[M. Greer]{Mark Greer}
\address{Department of Mathematics\\
One Harrison Plaza \\
University of North Alabama\\
Florence AL 35632 USA }
\email{\url{mgreer@una.edu}}
\subjclass[2010]{20N05, 05C25}
\keywords{Complete Graphs, Hamiltonian Decompositions, P-Groupoids, P-Quasigroups, Quandles}
\thanks{${}^*$Research partially supported by University of North Alabama QEP Undergraduate Research Grant}
\begin{document}
\allowdisplaybreaks
\begin{abstract}
We study P-groupoids that arise from certain decompositions of complete graphs.  We show that left distributive P-groupoids are distributive, quasigroups.  We characterize P-groupoids when the corresponding decomposition is a Hamiltonian decomposition for complete graphs of odd, prime order.  We also study a specific example of a P-quasigroup constructed from cyclic groups of odd order.  We show such P-quasigroups have characteristic left and right multiplication groups, as well as the right multiplication group is isomorphic to the dihedral group.
\end{abstract}

\maketitle

\section{Introduction}
\label{intro}

The concept of graph amalgamation was introduced in 1984 by Anthony Hilton \cite{hilton84}. Recently, the subject has gained more attention and is becoming more widely studied. We aim to provide insight into graph amalgamation by considering the results of amalgamation in Latin squares. First, we cover some preliminaries.

Recall that a graph is an ordered pair $G=(V,E)$ comprising a set $V$ of vertices with a set $E$ of edges.  A \emph{complete graph}, denoted by $K_n$ where $n$ is the number of vertices in the graph, is a graph where every pair of vertices is connected by an edge.  An \emph{edge coloring} of a graph $G$ is a function $\gamma: C\to E(G)$, where $C$ is a set of colors.  A \emph{Hamiltonian decomposition} of $K_{2n+1}$ is an edge-coloring of $K_{2n+1}$ with $n$ colors in which each color class is a $C_{2n+1}$ cycle, called \emph{Hamiltonian cycles}. 
\begin{example}
Consider $K_5$ and its Hamiltonian decomposition.
\begin{figure}[h!]
\null
\begin{tikzpicture}
\node[draw,circle,fill,inner sep=3.5pt] (1) at (0,3.077) {};
\node[draw,circle,fill,inner sep=3.5pt] (2) at (1.618,1.902) {};
\node[draw,circle,fill,inner sep=3.5pt] (3) at (1,0) {};
\node[draw,circle,fill,inner sep=3.5pt] (4) at (-1,0) {};
\node[draw,circle,fill,inner sep=3.5pt] (5) at (-1.618,1.902) {};
\node [above=.225cm] at (1) {1};
\node [right=.25cm] at (2) {2};
\node [below right=.175cm] at (3) {3};
\node [below left=.175cm] at (4) {4};
\node [left=.25cm] at (5) {5};
\begin{pgfonlayer}{background}
\draw[] (1) -- (2) -- (3) -- (4) -- (5) -- (1);
\draw[dashed] (1) -- (3) -- (5) -- (2) -- (4) -- (1);
\end{pgfonlayer}
\end{tikzpicture}
\end{figure}
\end{example}

We define \emph{graph amalgamation} in the following way.

\begin{definition}
Let $G$ and $H$ be two graphs with the same number of edges where $G$ has more vertices than $H$. We say that $H$ is an amalgamation of $G$ if there exists a bijection $\phi:E(G)\rightarrow E(H)$ and a surjection $\psi:V(G)\rightarrow V(H)$ where the following hold
\begin{enumerate}
\item If $x,y$ are two vertices in $G$ where $\psi(x)\neq\psi(y)$, and both $x$ and $y$ are adjacent by edge $e$ in $G$, then $\psi(x)$ and $\psi(y)$ are adjacent by edge $\phi(e)$ in $H$.
\item If $e$ is a loop on a vertex $x\in V(G)$, then $\phi(e)$ is a loop on $\psi(x)\in H$.
\item If $e$ joins $x,y\in V(G)$ where $x\neq y$, but $\psi(x)=\psi(y)$, then $\psi(e)$ is a loop on $\psi(x)$.
\end{enumerate}
\end{definition}

\begin{example} The following is an example of a graph amalgamation of the complete graph on 5 vertices with the amalgamation $\psi(1)=1$, $\psi(2)=2$, $\psi(3)=2$, $\psi(4)=3$, $\psi(5)=3$.
\begin{figure}[!h]
\centering
\begin{tikzpicture}
\node[draw,circle,fill] (1) at (0,3.077) {};
\node[draw,circle,fill] (2) at (1.618,1.902) {};
\node[draw,circle,fill] (3) at (1,0) {};
\node[draw,circle,fill] (4) at (-1,0) {};
\node[draw,circle,fill] (5) at (-1.618,1.902) {};
\node[draw,circle,fill] (6) at (6,2.382) {};
\node[draw,circle,fill] (7) at (7,.65) {};
\node[draw,circle,fill] (8) at (5,.65) {};
\node [above=.225cm] at (1) {1};
\node [below=3.25cm] at (1) {$G$};
\node [right=.225cm] at (2) {2};
\node [below right=.175cm] at (3) {3};
\node [below left=.175cm] at (4) {4};
\node [left=.225cm] at (5) {5};
\node [above=.225cm] at (6) {1};
\node [below=2.5cm] at (6) {$H$};
\node [above right=.175cm] at (7) {2};
\node [above left=.175cm] at (8) {3};
\node [draw=none,scale=3] (A) at (3.25,1.5) {$\sim$};

\begin{pgfonlayer}{background}
\draw[] (1) -- (2) -- (3) -- (4) -- (5) -- (1);
\draw[dashed] (1) -- (3) -- (5) -- (2) -- (4) -- (1);

\draw[] (6) -- (7) -- (8) -- (6);
\draw[] (8) arc [radius=.4cm, start angle=20, end angle= 380];
\draw[] (7) arc [radius=.4cm, start angle=160, end angle= 520];
\draw[dashed] 
(8) edge [out=85,in=210,distance=.6cm] (6) edge [out=335,in=205,distance=.55cm] (7) (6) edge [out=330,in=95,distance=.6cm] (7) (7) edge [out=155,in=25,distance=.55cm] (8) edge [out=230,in=310,distance=.675cm] (8);
\end{pgfonlayer}
\end{tikzpicture}
\end{figure}
\end{example}

Note that since the edges between two amalgamated graphs are in bijection with each other, edge colorings are invariant to amalgamation; that is, edge colors are unchanged by amalgamation. However, more interesting is the fact that if $G$ is a complete graph of the form $K_{2n+1}$ and the edges are colored in such a way as to specify a Hamiltonian decomposition, then the edges also form a Hamiltonian decomposition in $H$.

The concept of amalgamating a larger graph down into a smaller graph is a well understood concept in graph theory. Likewise, one can \emph{disentangle} vertices of a graph to create a larger graph. To disentangle a vertex is to split the vertex into multiple vertices. Using example 3, we could disentangle vertex 2 of graph $H$ into vertices 2 and 3, while disentangling vertex 3 into vertices 4 and 5 to create graph $G$. Some graph theorists are currently studying how to take a graph with a Hamiltonian decomposition such as graph $G$, and to disentangle $G$ to create a new graph, say $G'$, where $G'$ also has a Hamiltonian decomposition. Since the concept of amalgamation also exists in the Latin square setting, we approach the problem from an algebraic perspective.

Let $K_n$ be a complete graph.  It is well known that the edges in $K_n$ can be decomposed into distinct cycles if and only if $n$ is odd \cite{kotzig70}.  In this setting, Kotzig gave a complete characterization of a groupoid (termed P-groupoid) that would describe the decomposition.  Indeed, let $Q$ be a set with $n$ elements (corresponding to the vertices in $K_n$) and define $xy=z$ if and only if edges $(x,y)$ and $(y,z)$ are in the same cycle where $x\neq y$.  If $x=y$, then set $x^2=x$.

\begin{example}
Consider the previous example of $K_5$, along with its associated P-groupoid.
\begin{figure}[h!]
\parbox[s]{5cm}{
\null
\begin{tikzpicture}
\node[draw,circle,fill,inner sep=3.5pt] (1) at (0,3.077) {};
\node[draw,circle,fill,inner sep=3.5pt] (2) at (1.618,1.902) {};
\node[draw,circle,fill,inner sep=3.5pt] (3) at (1,0) {};
\node[draw,circle,fill,inner sep=3.5pt] (4) at (-1,0) {};
\node[draw,circle,fill,inner sep=3.5pt] (5) at (-1.618,1.902) {};
\node [above=.225cm] at (1) {1};
\node [right=.25cm] at (2) {2};
\node [below right=.175cm] at (3) {3};
\node [below left=.175cm] at (4) {4};
\node [left=.25cm] at (5) {5};
\node [draw=none,scale=3] (A) at (2.8,1.5) {$\sim$};
\begin{pgfonlayer}{background}
\draw[] (1) -- (2) -- (3) -- (4) -- (5) -- (1);
\draw[dashed] (1) -- (3) -- (5) -- (2) -- (4) -- (1);
\end{pgfonlayer}
\end{tikzpicture}
}
\parbox[s]{5cm}{
\null
\centering
\begin{tabular}{c|ccccc}

(Q,$\cdot$)& 1 & 2 & 3 & 4 & 5\\
\hline                      
1 & 1 & 3 & 5 & 2 & 4\\
2 & 5 & 2 & 4 & 1 & 3\\
3 & 4 & 1 & 3 & 5 & 2\\
4 & 3 & 5 & 2 & 4 & 1\\
5 & 2 & 4 & 1 & 3 & 5\\

\end{tabular}
}
\end{figure}

\end{example}

\newpage

Kotzig then showed that all decompositions of complete graphs are given by \emph{P-groupoids}, defining them as follows.
\begin{definition}[\cite{kotzig70}]
Let $(Q,\cdot)$ be a groupoid.  Then $(Q,\cdot)$ is a P-groupoid if for all $x,y,z\in Q$,
\begin{itemize}
\item[(\ref{pgroup}.1)] $x^2=x$ (Idempotent). \label{P1}
\item[(\ref{pgroup}.2)] $x\neq y \Rightarrow xy\neq x$  and $xy\neq y$. \label{P2}
\item[(\ref{pgroup}.3)] $xy=z \Leftrightarrow zy=x.$ \label{P3}
\end{itemize}
\label{pgroup}
\end{definition}
One can quickly show that the order of every P-groupoid is odd \cite{kotzig70} and that the equation $xa=b$ is always uniquely solvable for $x$.  Indeed, $xa=b \Leftrightarrow ba=x$.  Hence, P-groupoids are idempotent, right quasigroups.  We show that $\Mlt_\rho(Q)\ \chr\ \Mlt(Q)$ (Lemma \ref{char}) and that if the P-groupoid is left distributive, then it is right distributive and a quasigroup (Theorem \ref{leftdistthm}).  

D\'{e}nes and Keedwell gave the first specific example of a P-quasigroup relating to the decomposition \cite{DK}.  We also note that this P-quasigroup is a quandle and use results from \cite{macquarrie11} to describe the right multiplication group and automorphism group of D\'{e}nes and Keedwell's example (Theorems \ref{dihedral} \& \ref{autogroup}).  We then show that if $H\leq Q$ is a subquasigroup, then $|H|$ must divide $|Q|$ (Theorem \ref{sub}).  If the graph has prime order, then D\'{e}nes and Keedwell's example is exactly the P-quasigroup relating the Hamiltonian decomposition (Corollary \ref{prime}).  This immediately gives us that if $p$ is an odd prime, then the Hamiltonian decomposition is uniquely given by their example. 
\section{P-groudpoids and Quasigroups}
\label{back}  
A \emph{groupoid} $(Q,\cdot)$ is a set $Q$ with a binary operation $\cdot:Q\times Q \to Q$.  A \emph{quasigroup} $(Q,\cdot)$ is a groupoid such that for all $a,b\in Q$, the equations $ax = b$ and $ya = b$ have unique solutions $x,y\in Q$.  We denote these unique solutions by $x = a\ldiv b$ and $y = b\rdiv a$, respectively.  Standard references in quasigroup theory are \cite{bruck71, pflugfelder90}. All groupoids (quasigroups) considered here are finite.

To avoid excessive parentheses, we use the following convention:
\begin{itemize} 
\item multiplication $\cdot$ will be less binding than divisions $/,\backslash$.
\item divisions are less binding than juxtaposition.
\end{itemize}
For example $xy/z\cdot y\backslash xy$ reads as $((xy)/z)(y\backslash(xy))$.

For $x\in Q$, where $Q$ is a quasigroup, we define the \emph{right} and \emph{left translations} by $x$ by, respectively, $yR_x = yx$ and $yL_x = xy$ for all $y\in Q$. The fact that these mappings are permutations of $Q$ follows easily from the definition of a quasigroup.  It is easy to see that $yL_{x}^{-1}=x\backslash y$ and $yR_{x}^{-1}=y/x$.  We define the \emph{left multiplication group of Q}, $\Mlt_{\lambda}(Q)=\langle L_{x} \mid \forall x\in Q\rangle $, the \emph{right multiplication group of Q}, $\Mlt_{\rho}(Q)=\langle R_{x} \mid \forall x\in Q\rangle $ and the \emph{multiplication group of Q}, $\Mlt(Q)=\langle Mlt_\lambda(Q), \Mlt_\rho(Q)\rangle$.
\begin{lemma}
Let $Q$ be a P-groupoid.  Then $|R_{x}|=2$ for all $x\in Q$.  
\label{R2}
\end{lemma}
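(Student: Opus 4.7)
The plan is to show two things: first that $R_x^2$ is the identity map on $Q$, and second that $R_x$ itself is not the identity, so that the order of $R_x$ is exactly $2$.

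For the first step, I would apply axiom (\ref{pgroup}.3) directly. Take any $y \in Q$ and set $z = yx$, so $yR_x = z$. Axiom (\ref{pgroup}.3) says $yx = z \Leftrightarrow zx = y$, so $(yx)x = y$, i.e., $yR_x^2 = y$. Since $y$ was arbitrary, $R_x^2 = \mathrm{id}$, which means $|R_x|$ divides $2$.

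For the second step, I need to rule out $R_x = \mathrm{id}$, which forces $|R_x|$ to be exactly $2$. Assuming $|Q|\geq 2$ (which must hold since a one-element P-groupoid is a degenerate case, and the order of a P-groupoid is odd and at least $3$ in any nontrivial setting), pick any $y\in Q$ with $y\neq x$. Then axiom (\ref{pgroup}.2) yields $yx\neq y$, i.e., $yR_x\neq y$. Thus $R_x$ is not the identity permutation, and combined with $R_x^2 = \mathrm{id}$ this gives $|R_x|=2$.

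There is no real obstacle; the proof is essentially a direct unpacking of axioms (\ref{pgroup}.2) and (\ref{pgroup}.3). The only mild care needed is to note that $R_x$ is a permutation of $Q$ in the first place, which follows because P-groupoids are right quasigroups (as observed in the remarks following Definition \ref{pgroup}: the equation $xa = b$ is uniquely solvable). Thus $R_x \in \Sym(Q)$ and the notion of its order is well-defined.
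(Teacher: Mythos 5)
Your proof is correct and follows essentially the same route as the paper's: axiom (\ref{pgroup}.3) gives $R_x^2=\mathrm{id}$, and axiom (\ref{pgroup}.2) (together with idempotency) shows $R_x$ moves every $y\neq x$, hence is not the identity. The paper phrases this by writing out the cycle structure $R_x=(x)(q_1q_2)\cdots$, but the content is identical; your explicit remark about excluding the trivial one-element case is a small point the paper leaves implicit.
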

\begin{proof}
Let $|Q|=2n+1$ for some $n\in \mathbb{Z}$ and suppose $q_1x=q_2$ for some $x,q_1,q_2\in Q$.  Then $q_1R^2_{x}=q_2R_{x}=q_1$.  Moreover, $xR_{x}=x$.  Hence, $R_{x}=(x)(q_1q_2)(q_3q_4)\ldots(q_{2n})(q_{2n+1}).$  The desired result follows.
\end{proof}
Let $H$ be a subgroup of $G$.  If $H$ is generated by elements of the same order, \emph{i.e.} $H=\langle g_1,g_2,\ldots g_k | |g_1|=|g_2|=\ldots |g_k| = n\rangle$, then $H$ is a characteristic subgroup of $G$ ($H\ \chr \ G$).  Therefore, we have the following:
\begin{lemma}
Let $Q$ be a P-groupoid.  Then $\Mlt_\rho(Q)\ \chr \ \Mlt(Q)$.
\label{char}
\end{lemma}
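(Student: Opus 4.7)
The plan is to combine Lemma \ref{R2} with the group-theoretic principle stated immediately before the lemma. By Lemma \ref{R2}, every right translation $R_x$ is an involution in $\Mlt(Q)$. Since by definition $\Mlt_\rho(Q) = \langle R_x : x \in Q \rangle$, the right multiplication group is generated by elements of a single order ($n=2$). Invoking the preceding remark that a subgroup generated by elements all of the same order is characteristic, I immediately conclude $\Mlt_\rho(Q)\ \chr\ \Mlt(Q)$.

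Concretely, what I would check is that any automorphism $\varphi \in \Aut(\Mlt(Q))$ satisfies $\varphi(\Mlt_\rho(Q)) = \Mlt_\rho(Q)$. Since $\varphi$ preserves element orders, each $\varphi(R_x)$ is an involution of $\Mlt(Q)$. If one takes the generating principle in its standard form (the subgroup generated by \emph{all} elements of a fixed order is characteristic), then the only thing to verify is that $\Mlt_\rho(Q)$ coincides with $\langle g \in \Mlt(Q) : |g|=2\rangle$, so I would note that this equality is exactly what the remark preceding the lemma is asserting in this setting. The verification is then just a bookkeeping step once Lemma \ref{R2} is in hand.

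The main obstacle, if one wanted to be thorough rather than cite the remark verbatim, would be to confirm that every involution of $\Mlt(Q)$ can be built from the $R_x$, or equivalently to prove that $\{R_x : x \in Q\}$ is invariant under $\Aut(\Mlt(Q))$. The cleanest route uses the explicit cycle structure extracted in the proof of Lemma \ref{R2}: $R_x$ is a permutation of $Q$ with exactly one fixed point together with $n$ disjoint transpositions (where $|Q|=2n+1$). This rigid fixed-point count, together with the fact that automorphisms of $\Mlt(Q)$ respect order, is what makes the involution generators a distinguished set. Once that is noted, the lemma follows with no further computation.
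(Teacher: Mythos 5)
Your opening paragraph is, in substance, exactly the paper's proof: the paper also just cites Lemma~\ref{R2} together with the remark that a subgroup generated by elements of a common order is characteristic, and stops. The useful part of your write-up is that you press on what that remark actually licenses, and there the proposal runs into genuine trouble rather than resolving it.

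The correct group-theoretic principle is that the subgroup generated by \emph{all} elements of $G$ of a fixed order is characteristic in $G$. You rightly observe that to invoke it one must show $\Mlt_\rho(Q)=\langle g\in\Mlt(Q) : |g|=2\rangle$, but this is not ``exactly what the remark is asserting'' --- it is an extra claim, and it is false in general. Take $Q=(\mathbb{Z}_{15},\circ)$ from Theorem~\ref{DKex}: here $\Mlt(Q)$ consists of the maps $y\mapsto ay+b$ with $a\in\langle 2,-1\rangle\leq\mathbb{Z}_{15}^{*}$, while $\Mlt_\rho(Q)=\{y\mapsto \pm y+b\}$. The map $\sigma\colon y\mapsto 4y+3$ lies in $\Mlt(Q)$ (as $4=2^2$), is an involution since $\sigma^2(y)=16y+15\equiv y \bmod 15$, and is not in $\Mlt_\rho(Q)$ because its linear part is $4\not\equiv\pm1$. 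So the subgroup generated by all involutions of $\Mlt(Q)$ can be strictly larger than $\Mlt_\rho(Q)$, and the ``all elements of order two'' principle does not apply as you (or the paper) invoke it. Your fallback --- that $\{R_x\}$ is distinguished inside $\Mlt(Q)$ by its cycle structure of one fixed point and $n$ transpositions --- also fails to close the gap: an element of $\Aut(\Mlt(Q))$ is an \emph{abstract} group automorphism and need not preserve the cycle type of a permutation; only automorphisms induced by conjugation by elements of $\Sym(Q)$ normalizing $\Mlt(Q)$ do. (Indeed $\sigma$ above has three fixed points, so cycle type does separate it from the $R_x$, but nothing forces an automorphism of $\Mlt(Q)$ to respect that distinction.) To prove the lemma one needs some other intrinsic characterization of $\Mlt_\rho(Q)$ inside $\Mlt(Q)$; neither your proposal nor the paper's one-line proof supplies it.
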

\begin{proof}
Since every right translation of $Q$ has order 2, the results follows \cite{Hilton}
\end{proof}
A groupoid $Q$ is \emph{left distributive} if it satisfies $x(yz) = (xy)(xz)$ for all $x,y,z\in Q$. Similarly, it is \emph{right distributive} if it satisfies $(yz)x = (yx)(zx)$. A \emph{distributive} grouppoid is a groupoid that is both left and right distributive.
\label{leftdistthm}
\begin{theorem}
Let $Q$ P-groupoid.  If $Q$ is left distributive, then $Q$ is a distributive quasigroup. 
\end{theorem}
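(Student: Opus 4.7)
The plan is to prove the theorem in two stages: first show $Q$ is a quasigroup (by establishing that each $L_x$ is bijective), and then derive right distributivity from the resulting quasigroup structure and left distributivity.

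\emph{Stage 1 (Quasigroup).} Left distributivity together with idempotency makes each $L_x$ an endomorphism of $(Q,\cdot)$, since $L_x(uv) = x(uv) = (xu)(xv) = L_x(u)L_x(v)$. Because $Q$ is finite, it suffices to show each $L_x$ is injective. I proceed by induction on $|Q|$, the base case $|Q|=1$ being immediate. For the inductive step, assume for contradiction that $L_x$ is not surjective. Then $L_x(Q)$ is a proper sub-P-groupoid of $Q$: it is closed under $\cdot$ (by the endomorphism property) and closed under the derived right division $a/b = ab$ (an identity in every P-groupoid by (P3)), and it inherits LD. By the inductive hypothesis $L_x(Q)$ is a distributive quasigroup, so its right multiplications act transitively on $L_x(Q)$. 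Now consider the fibers $T_c := L_x^{-1}(c)$ for $c\in L_x(Q)$. Each $T_c$ is itself a sub-P-groupoid (the same closure arguments apply, using $x(uv) = (xu)(xv) = c\cdot c = c$), and $T_x = \{x\}$ by (P2). By Lemma~\ref{R2} each $R_y$ is an involution of $Q$, and LD yields $L_x(ty) = (xt)(xy) = c\cdot xy$ for $t\in T_c$; hence $R_y$ restricts to a bijection $T_c \to T_{c\cdot xy}$. As $y$ ranges over $Q$, $xy$ sweeps out $L_x(Q)$, and the transitivity of right multiplications on $L_x(Q)$ chains these bijections from $T_x$ to every $T_c$. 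Thus $|T_c|=|T_x|=1$ for every $c$, forcing $L_x$ to be injective. But then $L_x(Q)=Q$, contradicting our assumption. Hence every $L_x$ is bijective, and $Q$ is a quasigroup.

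\emph{Stage 2 (Right distributivity).} Once $Q$ is a quasigroup, LD rewrites as the operator identities $L_xL_y = L_{xy}L_x$ and, equivalently, $L_pR_q = R_{pq}L_p$ (by applying both sides to a third argument). Combined with $R_x^2=\mathrm{id}$ from Lemma~\ref{R2} and the commutation $L_pR_p=R_pL_p$---itself a consequence of LD and idempotency via $L_pR_p(y)=p(yp)=(py)(pp)=(py)p = R_pL_p(y)$---one obtains
\[
L_{yx}R_x \;=\; R_{(yx)\cdot x}L_{yx} \;=\; R_yL_{yx},
\]
using $(yx)\cdot x = y$ from (P3). Combining this with the conjugation form $L_{yx} = L_yL_xL_y^{-1}$ of LD (now meaningful since $L_y$ is invertible) through a straightforward operator manipulation yields $R_xL_y = L_{yx}R_x$; applied to any $z \in Q$, this is precisely $(yz)x = (yx)(zx)$, i.e., right distributivity.

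\emph{Main obstacle.} The crux is Stage~1's inductive step: one must combine the inductive hypothesis (giving $L_x(Q)$ the structure of a quasigroup with transitive right multiplications) with the fibrewise bijections $R_y:T_c\to T_{c\cdot xy}$ to propagate the triviality of $T_x$ to every fiber $T_c$. Stage~2 is essentially an algebraic book-keeping exercise once the quasigroup structure is in hand.
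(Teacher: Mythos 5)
Your Stage 1 is correct, though it takes a genuinely different and much heavier route than the paper. The paper establishes left cancellation with a single direct computation: assuming $xa=xb$, it shows $(ax)(ab\cdot x)=ax$ using only left distributivity and $R_x^2=\mathrm{id}$, then cancels twice with (P2) to get $a=b$ --- no induction, no image subgroupoids, no fiber analysis. Your inductive argument does go through ($L_x(Q)$ is a sub-P-groupoid inheriting left distributivity, the involutions $R_y$ give $|T_c|=|T_{c\cdot w}|$ for every $w\in L_x(Q)$, and left division in the quasigroup $L_x(Q)$ compares every fiber with $T_x=\{x\}$ in one step), but it is a heavy machine for a job the axioms do in six lines.

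The genuine gap is in Stage 2. Every identity you assemble --- $L_pL_q=L_{pq}L_p$, $L_pR_q=R_{pq}L_p$, $R_x^2=\mathrm{id}$, $L_pR_p=R_pL_p$, and $L_{yx}R_x=R_yL_{yx}$ --- is a one- or two-line consequence of left distributivity, idempotency, and (P3), and each of them only lets you move a right translation past a left translation in one direction (an $R$ acting first can be pushed to act last). Right distributivity is exactly the reversed commutation $R_xL_y=L_{yx}R_x$, and the step where you claim this ``follows by a straightforward operator manipulation'' is precisely where the entire content of the theorem lives; you never exhibit the manipulation, and the natural attempts are circular. For example, substituting $L_{yx}=L_yL_xL_y^{-1}$ into $L_{yx}R_x=R_yL_{yx}$ and simplifying with $L_pR_p=R_pL_p$ and $L_y^{-1}R_xL_y=R_{y\backslash x}$ reduces your goal to $R_{y\backslash x}L_y=R_yL_x$, i.e.\ $(yz)(y\backslash x)=(xz)y$, which after the substitution $x\to yx$ is just right distributivity in disguise. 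The paper's derivation of this half occupies a full page and requires a chain of roughly ten auxiliary identities obtained by nontrivial substitutions into consequences of left distributivity; your characterization of Stage 2 as ``algebraic book-keeping'' has the difficulty exactly backwards. As written, the proposal does not prove right distributivity.
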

\begin{proof}
Let $Q$ be a left distributive, P-groupoid.  Note that by left distributivity, we have $x\cdot yx = xy\cdot x$.  Suppose that $xa=xb$ for some $x,a,b\in Q$.  Then we compute
\begin{align*}
(ax)(ab\cdot x)&= [(ax)(ab)](ax\cdot x) &\textnormal{ by left distributivity,}\\
&= [(ax)(ab)]a &\textnormal{ by Lemma \ref{R2},}\\
&= [(a\cdot xb)] a &\textnormal{ by left distributivity,}\\
&= a (xb\cdot a),\\
&= a (xa \cdot a) &\textnormal{ by assumption,}\\
&= ax &\textnormal{ by Lemma \ref{R2}.6}
\end{align*}
Hence, we have $ab\cdot x =ax$ by (\ref{P2}.2).  Thus, $ab=a$ and hence, $b=a$ by (\ref{P2}.2) again.  Thus, $Q$ is a quasigroup.

For right distributive, we first compute note that by left distributivity $x(xy\cdot z) =(xy\cdot y)(xy\cdot z)=(xy)(yz)$.  Using (\ref{P3}.3), we have
\begin{equation}
[x(xy\cdot z)](yz) = xy.
\label{e39}
\end{equation}
Similarly, $(xy\cdot z)x = (xy\cdot z)(xy\cdot y) = (xy)(zy)$ and $x(xy\cdot z) = (xy\cdot y)(xy\cdot z) = (xy)(yz)$ both by left distributivity again, thus
\begin{equation}
(xy\cdot z)x = (xy)(zy),
\label{e31}
\end{equation}
\begin{equation}
x(xy\cdot z) = (xy)(yz).
\label{e30}
\end{equation}

Hence we have 
\begin{align*}
(x\cdot yz)(xz\cdot u)&=[(xy)(xz)](xz\cdot u) &\textnormal{ by left distributivity,}\\
&=(xy)[(xy)(xz)\cdot u] &\textnormal{ by \eqref{e30} with } x\to xy, y\to yz, z\to u,\\
&=(xy)[(x\cdot yz)\cdot u] &\textnormal{ by left distributivity,}
\end{align*}
thus
\begin{equation}
(x\cdot yz)(xz\cdot u) = (xy)[(x\cdot yz) \cdot u].
\label{e45}
\end{equation}
Substituting $y\to yz$ in \eqref{e39} give $x(yz) = [x(x(yz)\cdot z)][yz\cdot z] = x(x(yz)\cdot z)\cdot y$.  So
\begin{equation}
x(yz) = x(x(yz)\cdot z)\cdot y.
\label{e120}
\end{equation}
Hence we compute
\begin{align*}
x &= [x\cdot x(yz)][\underbracket[.75pt]{x(yz)}],\\
&=[x\cdot x(yz)][x(x(yz)\cdot z)\cdot y] &\textnormal{ by \eqref{e120}},\\
&=[x(x(yz)\cdot z)][xz\cdot y] &\textnormal{ by \eqref{e45} with } y\to x(yz), u\to y.
\end{align*}
Thus
\begin{equation}
x = [x(x(yz)\cdot z)][xz\cdot y].
\label{e571}
\end{equation}
Replacing $x\to xy$ and $ y\to x$ in \eqref{e571} gives
\begin{align*}
xy&= [(xy)\cdot (xy\cdot xz)z][\underbracket[.75pt]{(xy\cdot z)x]} &\textnormal{ by \eqref{e571} with } x\to xy,\\
&=[(xy)\cdot (xy\cdot xz)z](xy\cdot zy) &\textnormal{ by \eqref{e31}},\\
&=[(xy)\cdot (x\cdot yz)z](xy\cdot zy) &\textnormal{ by left distributivity,}
\end{align*}
and therefore
\begin{equation}
xy = [(xy)\cdot (x\cdot yz)z](xy\cdot zy).
\label{e642a}
\end{equation}
Recalling \eqref{e30} and substituting $y\to yz$, we have $(x\cdot yz)y = (x\cdot yz)(yz\cdot z) = x\cdot (x\cdot yz)z$, so
\begin{equation}
(x\cdot yz)y = x\cdot (x\cdot yz)z.
\label{e54}
\end{equation}  
We compute
\begin{align*}
x(yz\cdot x) &= (x\cdot yz)x,\\
&=(xy \cdot xz)x &\textnormal{ by left distributivity},\\
&=(xy)\cdot (xy\cdot xz)z &\textnormal{ by \eqref{e54} } x\to xy, y\to x,
\end{align*}
and hence
\begin{equation}
x(yz\cdot x) = (xy)[(x\cdot yz)z].
\label{e173}
\end{equation}
Hence, the right hand side of \eqref{e642a} can be rewritten as
\begin{equation}
xy = [x(yz\cdot x)](xy\cdot zy).
\label{e642b}
\end{equation}
Using left distributivity, we have
\begin{align*}
(xy)\cdot (xz\cdot y)(zy) &= [(xy)(xz\cdot y)] (xy\cdot zy) &\textnormal{ by left distributivity},\\
&=[(xy\cdot xz)(xy\cdot y)] (xy\cdot zy) &\textnormal{ by left distributivity},\\
&=[(x\cdot yz)(xy\cdot y)](xy\cdot zy) &\textnormal{ by left distributivity},\\
&=[(x\cdot yz)x] (xy\cdot zy),\\
&=[x(yz\cdot x)](xy\cdot zy),
\end{align*}
and thus
\begin{equation}
[x(yz\cdot x)][(xy\cdot zy)] = (xy)[(xz\cdot y)(zy)].
\label{e236}
\end{equation}
Therefore, the right hand side of \eqref{e642b} can be rewritten as $xy=(xy)[(xz\cdot y)(zy)]$

Finally, since $(xy)[(xz\cdot y)(zy)] = xy$, we have $(xz\cdot y)(zy)=xy$ by (\ref{P2}.2) and thus $(xy)(zy)=xz\cdot y$ by (\ref{P3}.3).

\end{proof}

We now focus on the first specific constructions of a P-quasigroup dealing with Hamiltonian decompositions was given by De\'{n}es and Keedwell \cite{DK}.  
\begin{theorem}[\cite{DK}]
Consider $\mathbb{Z}_n=\{0,1,\ldots,n-1\}$ where $n=2k+1$ for some $k\in \mathbb{Z}$.  Define $r\circ s = 2s-r \mod n$.  Then $(\mathbb{Z}_n,\circ)$ is a P-quasigroup of order $n$.  
\label{DKex}
\end{theorem}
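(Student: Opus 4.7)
The plan is to verify directly that the operation $r\circ s = 2s-r \pmod{n}$ satisfies the three axioms in Definition \ref{pgroup} and additionally has unique left-solvability, so that $(\mathbb{Z}_n,\circ)$ is not only a P-groupoid but in fact a P-quasigroup. Everything reduces to elementary modular arithmetic, but the hypothesis that $n$ is odd must be used in exactly two places, and I would make that usage explicit.

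First I would check (\ref{pgroup}.1): $r\circ r = 2r-r = r$, so idempotency is immediate. For (\ref{pgroup}.2), assume $r\neq s$. If $r\circ s = r$, then $2s-r \equiv r \pmod n$, i.e.\ $2(s-r)\equiv 0\pmod n$; since $n=2k+1$ is odd, $\gcd(2,n)=1$, so $2$ is a unit in $\mathbb{Z}_n$ and hence $s\equiv r\pmod n$, contradicting $r\neq s$. Similarly, $r\circ s = s$ forces $s = r$ directly, without even invoking oddness. For (\ref{pgroup}.3), I would observe that the defining relation $r\circ s = t$ is equivalent to $r+t = 2s$, which is symmetric in $r$ and $t$; therefore $r\circ s = t$ if and only if $t\circ s = 2s-t = r$.

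Once the three axioms hold, $(\mathbb{Z}_n,\circ)$ is a P-groupoid, and as noted in the paragraph after Definition \ref{pgroup}, this immediately makes it a right quasigroup (the equation $x\circ a = b$ has the unique solution $x = 2a-b$, visible from (\ref{pgroup}.3)). To upgrade to a P-quasigroup I still need unique solvability of $a\circ x = b$. This equation reads $2x \equiv a + b \pmod n$, and because $n$ is odd, $2$ is invertible modulo $n$, so $x = 2^{-1}(a+b)$ is the unique solution. This is the second and only other place the oddness of $n$ enters.

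There is no real obstacle here; the lone conceptual point is that oddness of $n$ is exactly what is needed to invert $2\in\mathbb{Z}_n$, and this single fact is what makes (\ref{pgroup}.2) hold and makes left translation bijective. I would present the proof as four short bullet verifications so that the reader sees at a glance both the structure of the argument and the two uses of the hypothesis $n=2k+1$.
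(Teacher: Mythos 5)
Your verification is correct and complete: idempotency, the two cancellation conditions, the symmetry $r+t=2s$ giving axiom (\ref{pgroup}.3), and the invertibility of $2$ modulo the odd integer $n$ for left solvability are exactly what is needed, and you correctly isolate the two places where oddness is used. Note that the paper itself offers no proof of this statement---it is quoted from D\'{e}nes and Keedwell---so there is no in-paper argument to compare against; your direct check is the standard one and would serve as a self-contained proof.
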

For the remainder of this section, $(\mathbb{Z}_n,\circ)$ will always refer to the quasigroup in Theorem \ref{DKex}.
A quasigroup $Q$ is medial (or entropic) if $(xy)(zw)=(xz)(yw)$ for all $x,y,z,w\in Q$.  Idempotent medial quasigroups are distributive \cite{stein56}.  There is a well-known correspondence between abelian groups and medial quasigroups, the Toyoda-Bruck theorem.

\begin{theorem}[\cite{stanovsky15}]
$(Q,\cdot)$ is a medial quasigroup if and only if there is an abelian group $(Q,+)$ such that $x\cdot y = f(x)+g(y)+c$ for all $x,y\in Q$ for some  commuting $f,g \in \Aut(Q)$ and $c\in Q$.
\end{theorem}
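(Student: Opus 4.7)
The plan is to prove the forward direction by a direct symmetric expansion, and the converse by constructing an abelian group structure on $Q$ via a principal isotope of $(Q,\cdot)$ and then identifying the commuting automorphisms as shifted translations.

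For the forward direction, assume $x\cdot y = f(x) + g(y) + c$ with commuting $f, g \in \Aut(Q,+)$. Expanding both $(x\cdot y)(z\cdot w)$ and $(x\cdot z)(y\cdot w)$ using additivity of $f, g$, commutativity of $+$, and $fg = gf$, each reduces to
\[
f^2(x) + fg(y) + gf(z) + g^2(w) + f(c) + g(c) + c,
\]
confirming mediality; bijectivity of $f, g$ gives the quasigroup axioms for $\cdot$.

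For the converse, fix $e \in Q$ and define the principal isotope $x + y := (x/e)\cdot(e\ldiv y)$. A direct check shows $(Q,+)$ is a loop with two-sided identity $0 := e\cdot e$, and substituting $x \mapsto R_e(x)$, $y \mapsto L_e(y)$ into the definition gives the key identity $x\cdot y = R_e(x) + L_e(y)$. The main step is to show $(Q,+)$ is an abelian group: each loop axiom for $+$ translates, via the recipe of right-multiplying by $0 = e\cdot e$ and applying mediality of $\cdot$ in the form $(AB)(ee) = (Ae)(Be)$, into a relabelled instance of mediality, after which bijectivity of $R_0$ lets us cancel and conclude. Commutativity of $+$ follows in one step (mediality with $a=e,b=v,c=u,d=e$ where $u = e\ldiv q, v = e\ldiv p$), and associativity by two nested applications.

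Once $(Q, +)$ is an abelian group, set $c := 0\cdot 0$, $f(x) := R_0(x) - c$, and $g(y) := L_0(y) - c$, with subtraction in $(Q,+)$. The relation $x\cdot y = R_e(x) + L_e(y) = f(x) + g(y) + c$ then follows from $c = R_e(0) + L_e(0)$ (the key identity at $x = y = 0$), and additivity of $f, g$ together with the commutation $fg = gf$ reduce to single applications of mediality on the shifted maps. The main obstacle is the bookkeeping around this shift: $R_e, L_e, R_0, L_0$ are only affine on $(Q,+)$, so one must subtract the appropriate constants to obtain genuine automorphisms fixing $0$, and the value of $c$ is forced uniquely by this requirement.
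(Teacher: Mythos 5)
The paper does not prove this statement: it is the classical Toyoda--Bruck representation theorem, quoted from \cite{stanovsky15} purely as a citation, so there is no in-paper argument to compare against. Judged on its own terms, your proposal is the standard proof and is essentially correct. The forward direction is complete: expanding $(x\cdot y)(z\cdot w)$ and $(x\cdot z)(y\cdot w)$ both yield $f^2(x)+fg(y)+gf(z)+g^2(w)+f(c)+g(c)+c$ precisely because $fg=gf$, and bijectivity of $f,g$ gives the quasigroup laws. The converse via the principal isotope $x+y=(x/e)\cdot(e\ldiv y)$ with identity $0=e\cdot e$, the key identity $x\cdot y=R_e(x)+L_e(y)$, and the normalization $f(x)=R_0(x)-c$, $g(y)=L_0(y)-c$, $c=0\cdot 0$ is the textbook route, and your one-line derivation of commutativity (apply $R_0$, use $(AB)(ee)=(Ae)(Be)$, cancel $R_0$) checks out. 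The only caveat is that the hardest parts are left at the level of a sketch: associativity of $+$ is not literally ``a relabelled instance of mediality'' after one multiplication by $0$, since $e\ldiv(q+r)$ must itself be unpacked, and the additivity of $f$ and $g$ (equivalently, that $R_e$ and $L_e$ are affine for $+$) requires writing $e=(e/e)\cdot(e\ldiv e)$ and a further mediality application; these are routine but constitute most of the work in a complete write-up. None of this is a wrong turn --- it is exactly where the standard proofs spend their effort --- so I would call this a correct outline rather than a finished proof.
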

Note that if $(G,+)$ is an abelian group of odd order, then both $f(x)=-x$ and $g(y)=2y$ are automorphisms of $G$.  Hence, De\'{n}es and Keedwell's P-quasigroup is precisely the medial quasigroup of the form $x\circ y = f(x)+g(y)+0$.  
\begin{lemma}
$(\mathbb{Z}_n,\circ)$ is medial.
\end{lemma}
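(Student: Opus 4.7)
The plan is to prove this by direct calculation, since the operation $r\circ s = 2s-r \pmod n$ is explicit and the medial identity $(xy)(zw)=(xz)(yw)$ has only four variables. I would simply expand both sides using the definition and check they coincide. For the left-hand side,
$(x\circ y)\circ(z\circ w) = 2(z\circ w) - (x\circ y) = 2(2w-z) - (2y-x) = x - 2y - 2z + 4w,$
and for the right-hand side,
$(x\circ z)\circ(y\circ w) = 2(y\circ w) - (x\circ z) = 2(2w-y) - (2z-x) = x - 2y - 2z + 4w,$
so the two sides agree modulo $n$.

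Alternatively, and perhaps more conceptually in line with the paragraph immediately preceding the lemma, I could simply invoke the Toyoda--Bruck theorem as stated. Taking $(Q,+)=(\mathbb{Z}_n,+)$, $f(x)=-x$, $g(y)=2y$, and $c=0$, we have $x\circ y = f(x)+g(y)+c$. Since $n$ is odd, $2$ is a unit in $\mathbb{Z}_n$, so both $f$ and $g$ are automorphisms of $\mathbb{Z}_n$; and any two endomorphisms of a cyclic group commute (they are determined by multiplication by an integer), so $fg=gf$. The Toyoda--Bruck theorem then yields that $(\mathbb{Z}_n,\circ)$ is medial.

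There is no real obstacle here; the only subtlety is making sure that $f$ and $g$ genuinely are automorphisms, which requires $n$ odd so that $\gcd(2,n)=1$, and this is exactly the hypothesis $n=2k+1$ carried over from Theorem~\ref{DKex}. I would present the direct calculation as the main proof because it is self-contained and transparent, and remark that it is a special case of the Toyoda--Bruck construction to emphasize the connection that the surrounding text is building toward.
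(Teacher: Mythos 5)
Both of your arguments are correct. The paper itself gives no displayed proof of this lemma: its justification is the sentence immediately preceding it, which observes that $x\circ y=f(x)+g(y)+0$ with $f(x)=-x$ and $g(y)=2y$ automorphisms of $(\mathbb{Z}_n,+)$ (using that $n$ is odd), and then appeals to the Toyoda--Bruck theorem just quoted. Your second argument is therefore exactly the paper's intended route, and you make explicit two points the paper leaves tacit --- that $g$ is an automorphism only because $\gcd(2,n)=1$, and that $f$ and $g$ commute because both are multiplications by integers. Your direct computation, expanding $(x\circ y)\circ(z\circ w)$ and $(x\circ z)\circ(y\circ w)$ to the common value $x-2y-2z+4w \pmod n$, is a more elementary self-contained verification that does not need the Toyoda--Bruck machinery at all; it buys transparency at the cost of not connecting the example to the structural theorem the paper has just set up. Either one suffices, and presenting the calculation with the Toyoda--Bruck identification as a remark is a reasonable choice.
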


\begin{proposition}
For $(\mathbb{Z}_n,\circ)$, the following hold:
\begin{itemize}
\item[(i)] $yL_x^n=2^n(y-x)+x$ for all $x,y\in Q$.
\item[(ii)] $|L_x| = k$ where $k$ is the smallest integer such that $2^k\equiv 1 \mod n$.
\item[(iii)] $L_x^nR_x = R_xL_x^n$.
\end{itemize}
\label{p1}
\end{proposition}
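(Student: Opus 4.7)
The plan is to treat all three parts as direct computations anchored in the explicit formula $r\circ s = 2s - r \pmod n$, with part (i) doing the main work and (ii), (iii) following as easy corollaries.

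For part (i), I would proceed by induction on the exponent. The base case is a one-line unpacking: $yL_x = x\circ y = 2y - x = 2(y-x) + x$, matching the formula with exponent $1$. For the inductive step, assuming $yL_x^m = 2^m(y-x) + x$, I apply $L_x$ one more time and use the definition of $\circ$ to get
\[
yL_x^{m+1} = x\circ\bigl(2^m(y-x)+x\bigr) = 2\bigl(2^m(y-x)+x\bigr) - x = 2^{m+1}(y-x)+x,
\]
closing the induction. (I would flag, perhaps in a footnote, that the exponent $n$ in the statement is distinct from the $n$ appearing in $\mathbb{Z}_n$; the identity holds for every positive integer exponent.)

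For part (ii), I would use (i) to rewrite the order condition directly. The equality $yL_x^k = y$ for every $y\in\mathbb{Z}_n$ is equivalent, via (i), to $(2^k-1)(y-x)\equiv 0\pmod n$ for all $y$. Taking $y = x+1$ forces $2^k \equiv 1\pmod n$, and conversely $2^k\equiv 1\pmod n$ makes the congruence hold for every $y$. Hence the order of $L_x$ is the multiplicative order of $2$ modulo $n$, which is the smallest $k$ with $2^k\equiv 1\pmod n$. Note this value is independent of $x$, which is a small bonus observation worth recording.

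For part (iii), I would simply compute both sides on an arbitrary element $y$ using (i) and the definition of $R_x$. One direction gives
\[
yL_x^n R_x = 2x - \bigl(2^n(y-x)+x\bigr) = x + 2^n(x-y),
\]
and the other gives
\[
yR_x L_x^n = 2^n\bigl((2x-y)-x\bigr) + x = 2^n(x-y)+x,
\]
so the two sides agree, establishing commutativity. I don't anticipate any real obstacle here; the only point requiring care is keeping the two uses of the symbol $n$ (the modulus and the exponent) separate in the write-up so the formula in (i) is unambiguous.
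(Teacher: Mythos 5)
Your proposal is correct and follows essentially the same route as the paper: part (i) by induction on the exponent, part (ii) by reducing $yL_x^k=y$ to $(2^k-1)(y-x)\equiv 0 \pmod n$, and part (iii) by direct computation (the paper checks only $L_xR_x=R_xL_x$ and invokes that commuting generators have commuting powers, whereas you compute with $L_x^n$ directly via (i) --- an immaterial difference). Your choice of $y=x+1$ in (ii) is a slightly more careful justification of the step from $(y-x)(2^k-1)\equiv 0$ to $2^k\equiv 1\pmod n$ than the paper gives, and your flag about the double use of the symbol $n$ is well taken.
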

\begin{proof}
Let $x,y\in Q$.  For $(i)$, $yL_x = 2y-x = 2(y-x)+x$. By induction, 
\[yL_x^{n+1} =(2y-x)L_x^n =  2^n((2y-x)-x)+x = 2^{n+1}(y-x)+x.\]  
For $(ii)$, let $k>0$ be the smallest integer such that $yL_x^k = y$.  Then, by $(1)$, 
 \[2^k(y-x)+x\equiv y  \Leftrightarrow 2^ky-y-2^kx+x \equiv 0\Leftrightarrow (y-x)(2^k-1)\equiv 0.\]
Hence, $2^k\equiv 1\mod n$.  Finally, 
\[ yL_xR_x=(2y-x)R_x = 3x-2y = (2x-y)L_x=yR_xL_x. \]
Since $\Mlt(Q)$ is a group, $(iii)$ follows.
\end{proof}
\begin{lemma}
Let $Q=(\mathbb{Z}_n,\circ)$.  Then $\Mlt_\lambda(Q)\ \chr \ \Mlt(Q)$.
\end{lemma}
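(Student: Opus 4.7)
The plan is to mirror the proof of Lemma \ref{char} verbatim, replacing the common order $2$ of the right translations with a common order for the left translations. In Lemma \ref{char}, the key input was Lemma \ref{R2}: every generator $R_x$ of $\Mlt_\rho(Q)$ is an involution, so $\Mlt_\rho(Q)$ is generated by elements of a single fixed order and hence is characteristic in $\Mlt(Q)$ by the cited principle of Hilton. To carry this over, what I need is a uniform statement about the orders of the left translations $L_x$, independent of $x$.

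This is exactly what Proposition \ref{p1}(ii) supplies. For every $x \in \mathbb{Z}_n$, the order of $L_x$ equals the smallest positive integer $k$ with $2^k \equiv 1 \pmod n$, that is, $k = \mathrm{ord}_n(2)$, which does not depend on $x$. In particular every generator in the set $\{L_x : x \in \mathbb{Z}_n\}$ has the \emph{same} order $k$.

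With this in hand, the proof collapses to one sentence: since $\Mlt_\lambda(Q) = \langle L_x \mid x \in \mathbb{Z}_n \rangle$ is generated by elements of a single common order $k = \mathrm{ord}_n(2)$, the same result of Hilton invoked in Lemma \ref{char} yields $\Mlt_\lambda(Q)\ \chr\ \Mlt(Q)$.

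I do not expect any real obstacle. The only point worth sanity checking is that the common order in Proposition \ref{p1}(ii) genuinely is $x$-independent, which is transparent from the formula $yL_x^k = 2^k(y-x)+x$ in part (i): the equation $L_x^k = \mathrm{id}$ reduces to $(2^k - 1)(y-x) \equiv 0 \pmod n$ for all $y$, a condition that depends only on $k$ and $n$. So the lemma follows entirely from Proposition \ref{p1}(ii) together with the argument template already established for Lemma \ref{char}.
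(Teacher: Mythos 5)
Your proof takes essentially the same route as the paper: the paper's own proof is simply ``This follows quickly from Proposition \ref{p1},'' and the intended content is exactly what you spell out, namely that part (ii) gives every $L_x$ the same order $\mathrm{ord}_n(2)$, so the same generated-by-elements-of-a-common-order principle invoked for Lemma \ref{char} applies. Your version is just a more explicit write-up of the paper's one-line argument.
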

\begin{proof}
This follows quickly from Proposition \ref{p1}.
\end{proof}
\begin{theorem}
Let $Q=(\mathbb{Z}_n,\circ)$.  Then $\Mlt_\rho(Q) \cong D_{2n}$, the dihedral group of order $2n$.
\label{dihedral}
\end{theorem}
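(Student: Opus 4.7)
The approach is to write each right translation explicitly as the composition of a translation and a fixed involution on $\mathbb{Z}_n$, and then recognize the resulting group as a standard semidirect product.

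First I would record that for any $x,y\in\mathbb{Z}_n$,
\[
yR_x = y\circ x = 2x-y \pmod n.
\]
Let $\tau_a\colon z\mapsto z+a$ denote translation by $a$ and let $\sigma=R_0\colon z\mapsto -z$. A direct check gives $R_x = \tau_{2x}\sigma$, and multiplying two right translations yields
\[
R_xR_y\colon z\mapsto 2y-(2x-z)=z+2(y-x),
\]
so $R_xR_y=\tau_{2(y-x)}$. In particular $R_0R_1=\tau_2$, and because $n$ is odd, $\gcd(2,n)=1$, so $\tau_2$ generates the full cyclic group $T=\langle\tau_1\rangle$ of order $n$ inside $\mathrm{Mlt}_\rho(Q)$.

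Next I would verify the dihedral relations. Since $R_x=\tau_{2x}\sigma$ for every $x$, and both $\sigma$ and every $\tau_{2x}$ belong to $\langle T,\sigma\rangle$, it follows that $\mathrm{Mlt}_\rho(Q)=\langle T,\sigma\rangle$. The involution $\sigma$ is not a translation (it has the unique fixed point $0$ since $n$ is odd, while nontrivial translations are fixed‑point‑free), so $\sigma\notin T$, giving $|\langle T,\sigma\rangle|\geq 2n$. A short computation shows
\[
\sigma\tau_1\sigma\colon z\mapsto -(-z+1)=z-1=\tau_1^{-1}(z),
\]
so conjugation by $\sigma$ inverts the generator of $T$. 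Therefore $\langle T,\sigma\rangle$ satisfies the defining relations of $D_{2n}$, namely a cyclic subgroup of order $n$ together with an involution that inverts it, so it is a homomorphic image of $D_{2n}$ and, having order at least $2n$, must be isomorphic to $D_{2n}$.

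I do not expect a real obstacle here: the whole argument rests on the neat identity $R_x=\tau_{2x}\sigma$ and on $2$ being a unit modulo the odd integer $n$. The only point that warrants care is establishing that the order of the group is exactly $2n$ rather than a proper quotient of $D_{2n}$; I would handle this as above by exhibiting $\tau_1\in T$ of order $n$ and $\sigma\notin T$, which forces $|\mathrm{Mlt}_\rho(Q)|\geq 2n$ and pairs with the upper bound coming from the dihedral presentation.
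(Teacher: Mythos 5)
Your proof is correct and follows essentially the same route as the paper: both rest on the formula $yR_x = 2x-y$, the fact that each $R_x$ is an involution, and the computation that a product of two right translations is a translation by an even residue, which generates all of $\mathbb{Z}_n$ because $n$ is odd. If anything, your writeup is the more complete of the two, since the paper's proof stops after exhibiting the relations $R_x^2=1$ and $zR_xR_{x+l}=z+2l$ without explicitly ruling out a proper quotient of $D_{2n}$ --- a point you settle by showing $\sigma\notin T$ and $|T|=n$.
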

\begin{proof}
Recall that $D_{2n}=\langle x,y\bigm| x^n=y^2=(xy)^2=1\ \forall x,y\in D_{2n}\rangle$. Let $x,y,z,l\in Q$. By proposition 3 we have that $|R_x|=2\ \forall x\in Q$ and thus $R_x^2=1$. Now, note that $zR_xR_{x+l}=(2x-z)R_{x+l}=2x+2l-(2x-z)=z+2l$. Likewise, $zR_{x+l}R_x=(2x+2l-z)R_x=2x-(2x+2l-z)=z-2l$.
\end{proof}
\begin{definition}
A groupoid $(Q,\cdot)$ is a quandle if
\begin{enumerate}
\item $a^2=a$ for all $a\in Q$,
\item For all $a,b\in Q$, the equations $xa = b$ have a unique solution,
\item $(ab)c=(ac)(bc)$ for all $a,b,c\in Q$.
\end{enumerate}
\end{definition}
Note that quandles are idempotent, right distributive, and right quasigroups.
\begin{lemma}
$(\mathbb{Z}_n,\circ)$ is a quandle.
\end{lemma}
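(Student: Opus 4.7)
The plan is to verify the three quandle axioms directly, leveraging the properties of $(\mathbb{Z}_n,\circ)$ already established in this section. The operation $r\circ s = 2s-r\pmod n$ is explicit, so each axiom reduces to a one-line computation; the only question is how to organize the proof so that it reuses rather than re-derives earlier facts.

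For axiom (1), idempotence is immediate since $a\circ a = 2a-a = a$. For axiom (2), I would simply note that $(\mathbb{Z}_n,\circ)$ is a P-quasigroup by Theorem \ref{DKex} (and in particular a P-groupoid), so by the remark following Definition \ref{pgroup} the equation $x\circ a = b$ has the unique solution $x = b\circ a = 2a - b$.

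The only substantive step is right distributivity, axiom (3). The cleanest route is to invoke the result cited from Stein: an idempotent medial quasigroup is distributive. We have just shown $(\mathbb{Z}_n,\circ)$ is medial (Lemma preceding Proposition \ref{p1}), and it is idempotent by axiom (1), so it is in particular right distributive, giving $(a\circ b)\circ c = (a\circ c)\circ(b\circ c)$. Alternatively, one can verify this directly: both sides expand to $a - 2b + 2c \pmod n$, which is a routine check. I would include the one-line direct calculation as a backup so the lemma is self-contained, but lead with the conceptual argument via mediality plus Stein's theorem.

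I do not anticipate any real obstacle here, since every axiom collapses to an arithmetic identity in $\mathbb{Z}_n$. The main thing to get right is citation hygiene: making sure axioms (1) and (2) are attributed to Theorem \ref{DKex} and the preceding discussion, and axiom (3) is attributed either to Stein's theorem via the just-proved mediality or to a two-line direct verification. The proof therefore should be at most a few lines.
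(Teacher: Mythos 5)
Your proof is correct and is essentially the argument the paper intends: the paper actually states this lemma without proof, relying on exactly the facts you cite (idempotence by direct computation, unique solvability of $x\circ a=b$ from Theorem \ref{DKex}, and right distributivity from mediality plus Stein's result that idempotent medial quasigroups are distributive). Your backup direct verification that both sides of axiom (3) equal $a-2b+2c \pmod n$ is also correct, so nothing is missing.
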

Theorem \ref{dihedral} is well-known and $(\mathbb{Z}_n,\circ)$ is also referred to as the \emph{dihedral quandle} of order $n$.  Moreover, both $L_x$ and $R_y$ are affine maps for all $x,y$.  Indeed
\[
[(1-t)a+tb]L_x = 2[(1-t)a+tb]-x = (1-t)(2a-x)+j(2b-x) = (1-t)(aL_x)+t(bL_x),
\]
\[
[(1-t)a+tb]R_y = 2y - [(1-t)a+tb] =  (1-t)(2y-a) + t(2y-b)  = (1-t)(aR_y) + t(bR_y),
\]
for all $a,b,t\in \mathbb{Z}_n$.  That is

\begin{theorem}[\cite{macquarrie11}]
$\Aut(\mathbb{Z}_n,\circ)$ is isomorphic to the affine group Aff($\mathbb{Z}_n$).
\end{theorem}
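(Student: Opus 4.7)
The plan is to exhibit an explicit isomorphism $\Phi: \mathrm{Aff}(\mathbb{Z}_n) \to \Aut(\mathbb{Z}_n, \circ)$ given on parameters by $\Phi(a,b)(x) = ax + b$ for $a \in \mathbb{Z}_n^{\times}$ and $b \in \mathbb{Z}_n$, where $\mathrm{Aff}(\mathbb{Z}_n)$ carries its usual semidirect product structure.

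First I would verify that each such affine map is a quasigroup automorphism; this collapses to the one-line identity $a(2s-r)+b = 2(as+b)-(ar+b)$, so $\Phi(a,b)(r \circ s) = \Phi(a,b)(r) \circ \Phi(a,b)(s)$, and bijectivity follows from $a$ being a unit. Composition of affine maps is itself affine with the expected parameters, so $\Phi$ is a group homomorphism, and it is injective because the parameters $b = \phi(0)$ and $a = \phi(1)-\phi(0)$ are recovered from any map in its image.

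The real content of the argument is surjectivity. Given $\phi \in \Aut(\mathbb{Z}_n, \circ)$, I would reduce to the basepoint-preserving case by setting $b = \phi(0)$ and $\psi(x) := \phi(x) - b$; a short direct computation using $r \circ s = 2s - r$ shows that $\psi$ is again a quasigroup automorphism with $\psi(0) = 0$. From the single identity $\psi(2y - x) = 2\psi(y) - \psi(x)$, specializing $x = 0$ gives $\psi(2y) = 2\psi(y)$, specializing $y = 0$ gives $\psi(-x) = -\psi(x)$, and replacing $x$ by $-x$ yields $\psi(2y + x) = \psi(2y) + \psi(x)$. Since $n$ is odd, $2 \in \mathbb{Z}_n^{\times}$, so every $u \in \mathbb{Z}_n$ has the form $2y$; hence $\psi(u+x) = \psi(u) + \psi(x)$ for all $u,x \in \mathbb{Z}_n$. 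Thus $\psi \in \Aut(\mathbb{Z}_n, +)$, so $\psi(x) = ax$ for some $a \in \mathbb{Z}_n^{\times}$, and $\phi(x) = ax+b$ lies in the image of $\Phi$.

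The only delicate point is extracting additivity of $\psi$ from the quasigroup identity, but the crucial leverage — that $2$ is invertible modulo the odd integer $n$ — makes this reduction routine; everything else is bookkeeping around the explicit formula $r \circ s = 2s - r$.
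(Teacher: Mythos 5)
Your proof is correct, and it is worth noting that the paper itself does not prove this statement at all: the theorem is imported from \cite{macquarrie11}, and the only supporting material in the text is the preceding remark that $L_x$ and $R_y$ preserve barycentric combinations $(1-t)a+tb$, which at best motivates the containment $\Aut(\mathbb{Z}_n,\circ)\supseteq \mathrm{Aff}(\mathbb{Z}_n)$ and says nothing about the reverse inclusion. Your argument supplies the whole theorem self-contained: the verification that $x\mapsto ax+b$ respects $r\circ s=2s-r$ handles the easy containment, and your surjectivity argument --- translate so that $\psi(0)=0$, then specialize $\psi(2y-x)=2\psi(y)-\psi(x)$ at $x=0$ and $y=0$ and use that $2$ is invertible modulo the odd integer $n$ to conclude $\psi$ is additive, hence multiplication by a unit --- is exactly the nontrivial half that the paper leaves to the citation. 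All the steps check: $\psi=\Phi(1,-b)\circ\phi$ really is an automorphism fixing $0$ since translations lie in the image of $\Phi$, and the passage from $\psi(2y+x)=\psi(2y)+\psi(x)$ to full additivity legitimately requires $n$ odd, which holds by the standing hypothesis $n=2k+1$. So your route is more informative than the paper's: it yields the explicit description of every automorphism as an affine map rather than merely an abstract isomorphism of groups.
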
 
For a quandle $Q$, the \emph{inner automorphism group of Q}, $\Inn(Q)$ is the subgroup generated by $L_x$ for all $x\in Q$.  Thus, the authors of \cite{macquarrie11} restated Theorem \ref{dihedral} as $\Inn(\mathbb{Z}_n,\circ)$ is isomorphic to the dihedral group of order $n$.  Moverover, they showed that the automorphism group of $(\mathbb{Z}_n,\circ)$ is isomorphic to $\Mlt((\mathbb{Z}_n,\circ))$.
\begin{theorem}[\cite{macquarrie11}]
$\Aut(\mathbb{Z}_n,\circ)=\Mlt(\mathbb{Z}_n,\circ)$.
\label{autogroup}
\end{theorem}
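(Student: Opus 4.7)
My plan is to establish both inclusions separately. For $\Mlt(\mathbb{Z}_n,\circ) \subseteq \Aut(\mathbb{Z}_n,\circ)$, I would use the remark following Theorem \ref{DKex} that $(\mathbb{Z}_n,\circ)$ is idempotent and medial, hence distributive. In a distributive quasigroup, left distributivity $x(yz)=(xy)(xz)$ exhibits each $L_x$ as a homomorphism, and right distributivity does the same for each $R_y$; since translations in a quasigroup are bijective, they are automorphisms. As $\Mlt(Q)=\langle \Mlt_\lambda(Q),\Mlt_\rho(Q)\rangle$, every element of $\Mlt(Q)$ is an automorphism.

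For the reverse inclusion $\Aut(\mathbb{Z}_n,\circ) \subseteq \Mlt(\mathbb{Z}_n,\circ)$, the strategy is to invoke the preceding theorem identifying $\Aut(\mathbb{Z}_n,\circ)$ with the affine group $\mathrm{Aff}(\mathbb{Z}_n)$, so that every automorphism has the form $\phi(x)=ax+b$ with $a\in\mathbb{Z}_n^*$ and $b\in\mathbb{Z}_n$, and then realize each such $\phi$ explicitly as a word in the $L$'s and $R$'s. A direct calculation gives $L_x:y\mapsto 2y-x$ and $R_x:y\mapsto 2x-y$, so the composition $R_xR_y$ is the translation $z\mapsto z+2(y-x)$; since $n$ is odd, $2$ is invertible and such compositions realize all translations of $\mathbb{Z}_n$. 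Iterating $L_0$ yields multiplication by $2^k$, and $R_0$ yields multiplication by $-1$. Thus any affine map whose multiplicative part $a$ lies in $\langle 2,-1\rangle\le\mathbb{Z}_n^*$ can be written as $L_0^k$ (or $L_0^k R_0$) followed by a suitable translation, placing $\phi$ inside $\Mlt(Q)$.

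The main obstacle is precisely the step asserting that the multiplicative part $a$ always lies in the subgroup generated by $2$ and $-1$; in other words, one must show $\langle 2,-1\rangle=\mathbb{Z}_n^*$. This is the genuine number-theoretic content of the theorem and is the place where the formal group-theoretic bookkeeping above does not suffice. I would concentrate effort here, either by analyzing the order of $2$ modulo $n$ together with whether $-1\in\langle 2\rangle$, or by appealing to the argument in \cite{macquarrie11}. Once this equality of subgroups is established, the explicit factorization $\phi = L_0^k R_0^{\varepsilon} \cdot (R_xR_y)$ from the previous paragraph realizes every affine map as an element of $\Mlt(Q)$, completing the containment and hence the proof.
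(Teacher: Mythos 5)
The paper offers no proof of this statement at all---it is quoted from \cite{macquarrie11}---so there is nothing to compare your argument against; it has to stand on its own. Your first inclusion is fine: $(\mathbb{Z}_n,\circ)$ is idempotent and medial, hence distributive, so every $L_x$ and $R_y$ is a bijective endomorphism and $\Mlt(\mathbb{Z}_n,\circ)\le\Aut(\mathbb{Z}_n,\circ)$. Your reduction of the reverse inclusion is also exactly right: every element of $\Mlt(\mathbb{Z}_n,\circ)$ is an affine map $y\mapsto ay+b$ whose multiplicative part $a$ lies in the subgroup $\langle 2,-1\rangle\le\mathbb{Z}_n^*$ (the generators $L_x$ and $R_x$ have multiplicative parts $2$ and $-1$, and composition multiplies these), while all translations are realized by the products $R_xR_y$. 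So $\Mlt(\mathbb{Z}_n,\circ)=\{\,y\mapsto ay+b : a\in\langle 2,-1\rangle,\ b\in\mathbb{Z}_n\,\}$, and the theorem is equivalent to the number-theoretic assertion $\langle 2,-1\rangle=\mathbb{Z}_n^*$.

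The obstacle you flagged, however, is not merely the hard step---it is false in general, so the proof cannot be completed and the theorem as stated fails for infinitely many $n$. Take $n=17$: since $2^4\equiv-1\pmod{17}$, we get $\langle 2,-1\rangle=\langle 2\rangle=\{1,2,4,8,9,13,15,16\}$ of order $8$, while $|\mathbb{Z}_{17}^*|=16$. The map $\phi(x)=3x$ satisfies $\phi(x\circ y)=3(2y-x)=2\phi(y)-\phi(x)=\phi(x)\circ\phi(y)$, so it is an automorphism, yet its multiplicative part $3\notin\langle 2,-1\rangle$ puts it outside $\Mlt(\mathbb{Z}_{17},\circ)$; concretely $|\Aut|=17\cdot16=272$ versus $|\Mlt|=17\cdot8=136$. (For $n=31$ the index is $3$.) Equality holds precisely for those odd $n$ with $\mathbb{Z}_n^*=\langle 2,-1\rangle$, e.g.\ $n=3,5,7,9,11,13,15$ but not $n=17$; the cited source's actual content is presumably only $\Aut(\mathbb{Z}_n,\circ)\cong\mathrm{Aff}(\mathbb{Z}_n)$ together with $\Inn(\mathbb{Z}_n,\circ)\cong D_{2n}$. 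Your instinct to isolate and distrust exactly this step was the correct one; the conclusion to draw is that the step is unprovable and the statement needs a hypothesis on $n$.
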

Note that P-quasigroups always have subgroups $\langle x \rangle$ for all $x$.  It is well-known that in general, the order of a subquasigroup doesn't divide the order of the quasigroup.  However, for $(\mathbb{Z}_n,\circ)$, the order of the subquasigroup always divides the order of the quasigroup.
\begin{theorem}
Let $Q=(\mathbb{Z}_n,\circ)$.  If $H\leq Q$, then $|H|$ divides $|Q|$.  Hence, if $Q$ has prime order and $|H|\leq |Q|$, then $H=\langle x\rangle$ for some $x\in Q$ or $H=Q$.
\label{sub}
\end{theorem}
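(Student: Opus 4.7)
The plan is to show that every subquasigroup $H \leq Q$ is a coset of some subgroup of the additive group $(\mathbb{Z}_n, +)$; Lagrange's theorem then yields $|H|$ dividing $n$, and the prime case follows by trivial case analysis. The main tool is the observation that for every $x, y \in H$, the composition $R_x R_y$ acts on $\mathbb{Z}_n$ as a translation and preserves $H$.

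To set this up, I first observe that for each $x \in H$, closure of $H$ under $\circ$ gives $HR_x \subseteq H$, and since $R_x^2 = \mathrm{id}$ by Lemma \ref{R2}, the restriction of $R_x$ to $H$ is a bijection. Hence for $x, y \in H$, the composition $R_x R_y$ also preserves $H$. A direct computation in $(\mathbb{Z}_n, \circ)$ gives
\[
z R_x R_y = (2x - z) R_y = 2y - (2x - z) = z + 2(y - x),
\]
exhibiting $R_x R_y$ as translation by $2(y - x)$ on $\mathbb{Z}_n$. Thus $H$ is invariant under translation by $2(y-x)$ for every $x, y \in H$.

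Next, I would define $K := \langle 2(y - x) : x, y \in H \rangle \leq (\mathbb{Z}_n, +)$. The previous paragraph shows that $H$ is a union of cosets of $K$. On the other hand, since $n$ is odd, multiplication by $2$ is an automorphism of $\mathbb{Z}_n$, so $2K = K$, which forces $y - x \in K$ for all $x, y \in H$. Hence $H$ is contained in a single $K$-coset. Combining, $H = a + K$ for any $a \in H$, and Lagrange's theorem gives that $|H| = |K|$ divides $n$. For the second statement, if $|Q| = p$ is prime then divisibility forces $|H| \in \{1, p\}$: if $|H| = 1$ then $H = \{x\} = \langle x \rangle$ by idempotence of $x$, and if $|H| = p$ then $H = Q$.

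The main subtlety I expect is in the third paragraph: establishing simultaneously that $H$ is a union of $K$-cosets and that $H$ is contained in a single $K$-coset, so that the two facts combine to give $H$ equal to one coset. The oddness of $n$ is essential here, since it is precisely what makes $2$ a unit in $\mathbb{Z}_n$ and ensures that $K$ absorbs the differences $y - x$.
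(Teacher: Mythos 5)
Your proof is correct, and it arrives at the same structural conclusion as the paper---that $H$ is a coset $a+K$ of a subgroup $K$ of the additive group $(\mathbb{Z}_n,+)$, whence $|H|=|K|$ divides $n$---but the route is genuinely different and, as it happens, more watertight. The paper fixes two elements $x$ and $y=x+k$ of $H$ and iterates the operation to produce $x+2k$, $x+3k$, and so on, concluding that the elements of $H$ are of the form $x+lk$; it then asserts from $k(l_1-l_2)\equiv 0 \pmod n$ that $k$ divides $n$, which is not literally true (take $k=6$, $n=9$): the correct statement is that $\langle k\rangle$ has order $n/\gcd(n,k)$, still a divisor of $n$. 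The paper's generation argument also only tracks the subquasigroup generated by a single pair, whereas your stabilizer argument handles an arbitrary $H$ in one pass. Your mechanism is operator-theoretic rather than generative: you note that $R_xR_y$ is the translation $z\mapsto z+2(y-x)$ preserving $H$, so the stabilizer $\{t: H+t=H\}$ (a subgroup of $(\mathbb{Z}_n,+)$) contains $K=\langle 2(y-x): x,y\in H\rangle$, making $H$ a union of $K$-cosets; then oddness of $n$ makes $2$ a unit, so $2K=K$ (an automorphism of a cyclic group fixes each of its subgroups, there being only one of each order), which forces every difference $y-x$ into $K$ and pins $H$ inside a single coset. The two facts you flag as the subtlety do combine exactly as you say, since $H$ is nonempty. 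Both proofs dispose of the prime-order case by the same trivial dichotomy. What your approach buys is a clean appeal to Lagrange in place of the paper's divisibility slip, at the cost of introducing the auxiliary subgroup $K$; what the paper's approach buys is an explicit description of $H$ as $\{x, x+k,\ldots, x+(l-1)k\}$, which your argument also yields once one identifies $K=\langle\gcd\text{-generator}\rangle$.
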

\begin{proof}
Let $H\leq Q$.  If $|H|=\langle x \rangle$, then $|H|=1$ and we are done.  Let $x,y\in Q$.  Then $y=x+k$, since both $x,y\in \mathbb{Z}_n$.  Then $x\circ y = x+2k\in H$.  Continuing, $x\circ (x+2k) = x+3k$, and thus, elements of $H$ are of the form $x+lk$.  Since $Q$ is finite, we must have $x+l_1k=x+l_2k$.  Thus, $k(l_1-l_2)\equiv 0\mod n$.  Thus, $k$ is a divisor of $n$.  Let $kl=n$.  Then $H=\{x,x+k,x+2k,\ldots x+(l-1)k\}$, and therefore $|H|=l$, a divisor of $n$.
\end{proof}
The following is a minimal example of a P-groupoid that is not a quandle, found by \textsc{Mace4} \cite{mccune09}.
\begin{example}
A P-groupoid of order 5 that is not a quandle.
\begin{center}
\begin{tabular}{c|ccccc}

(Q,$\cdot$)& 1 & 2 & 3 & 4 & 5\\
\hline                      
1 & 1 & 3 & 2 & 3 & 4\\
2 & 3 & 2 & 1 & 5 & 3\\
3 & 2 & 1 & 3 & 1 & 2\\
4 & 5 & 5 & 5 & 4 & 1\\
5 & 4 & 4 & 4 & 2 & 5\\

\end{tabular}
\end{center}
\end{example}
\section{Hamiltonian Decompostions and P-quasigroups}
\label{conc}
We now focus on the connection between P-quasigroups and Hamiltonian decompositions.  
\begin{theorem}
Let $Q_1$ and $Q_2$ be two P-groupoids.  Then, $Q_1\cong Q_2$ \emph{if and only if} the corresponding decompositions of the associated complete graph is isomorphic.
\label{iso}
\end{theorem}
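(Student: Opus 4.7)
The plan is to exploit Kotzig's explicit correspondence: for a P-groupoid $Q$ of order $n$, the associated decomposition of $K_n$ on vertex set $Q$ is defined so that for $x \neq y$, the product $z = xy$ is precisely the vertex such that edges $\{x,y\}$ and $\{y,z\}$ lie in the same cycle. Thus the groupoid operation is fully determined by (and fully determines) the local rule ``which pairs of adjacent edges share a color,'' and the theorem reduces to unpacking this correspondence in both directions.

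For the forward direction, I would take a P-groupoid isomorphism $\phi: Q_1 \to Q_2$, view it as a bijection on the common vertex set, and show it carries each cycle of the $Q_1$-decomposition onto a cycle of the $Q_2$-decomposition. Tracing a cycle $c_1 c_2 \cdots c_k c_1$ of $Q_1$ via the identities $c_{i+2} = c_i \cdot c_{i+1}$ and applying $\phi$ gives $\phi(c_{i+2}) = \phi(c_i) \cdot \phi(c_{i+1})$, so the image sequence obeys the same recurrence in $Q_2$ and is therefore a cycle of the $Q_2$-decomposition.

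For the converse, I would start from a bijection $\phi : Q_1 \to Q_2$ inducing a graph isomorphism that maps color classes to color classes, and verify $\phi(xy) = \phi(x)\phi(y)$ by cases. The idempotent case $x = y$ is immediate. For $x \neq y$ with $xy = z$, the edges $\{x,y\}$ and $\{y,z\}$ lie consecutively in some cycle of $Q_1$; since $\phi$ respects the decomposition, $\{\phi(x),\phi(y)\}$ and $\{\phi(y),\phi(z)\}$ are consecutive edges in the image cycle of the $Q_2$-decomposition, which by the defining rule of the $Q_2$-operation forces $\phi(x)\phi(y) = \phi(z)$.

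There is no substantive obstacle: the whole argument amounts to the observation that Kotzig's construction is a natural bijection between P-groupoids on a set $Q$ and cycle decompositions of $K_{|Q|}$ labelled by $Q$, and such bijections transport isomorphism classes to isomorphism classes. The one small subtlety worth double-checking is that in the forward direction the image closed walk is a genuine simple cycle of the correct length, but this is immediate from $\phi$ being a bijection of vertices.
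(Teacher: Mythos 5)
Your proposal is correct and follows essentially the same route as the paper: both directions amount to unpacking Kotzig's definitional correspondence (edges $\{x,y\}$ and $\{y,z\}$ share a cycle iff $xy=z$) and observing that a bijection preserving one side of the equivalence preserves the other. Your version is slightly more careful in the forward direction (explicitly tracing cycles via the recurrence and noting the image is a genuine cycle), but the underlying argument is the same.
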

\begin{proof}
Suppose $\phi$ is an isomorphism between $Q_1$ and $Q_2$ where $Q_1$ corresponds to a decomposition of $J_n$ and $Q_2$ corresponds to a decomposition of $K_n$. Recall for $a,b,c\in Q_1$ we say that edges $(a,b)(b,c)$ belong to the same cycle in $J_n$ if and only if $ab=c$. Then $(\phi(a),\phi(b))(\phi(b),\phi(c))$ belong to the same cycle in $K_n$ if and only if $\phi(a)\phi(b)=\phi(c)$. Since this is precisely how we establish a correspondence between P-groupoids and complete undirected graphs, we conclude that the decomposition of $J_n$ is isomorphic to the decomposition of $K_n$.

Alternatively, let $Q_1$ and $Q_2$ be P-groupoids where $Q_1$ corresponds to a decomposition of $J_n$ and $Q_2$ corresponds to a decomposition of $K_n$. Suppose $\phi$ is an isomorphism between the decomposition of $J_n$ and the decomposition of $K_n$. Recall for $a,b,c\in Q_1$ we say that edges $(a,b)(b,c)$ belong to the same cycle in $J_n$ if and only if $ab=c$. If $(\phi(a),\phi(b))(\phi(b),\phi(c))$ belong to the same cycle in $K_n$, then $\phi(a)\phi(b)=\phi(c)$. Again, since this is precisely how we establish a correspondence between P-groupoids and complete undirected graphs, we conclude that $Q_1\cong Q_2$.
\end{proof}
\begin{theorem}]\cite{DK}]
Let $p$ be an odd prime.  Then $(\mathbb{Z}_n,\circ)$ corresponds to a Hamiltonian decomposition in $K_p$.
\label{HD}
\end{theorem}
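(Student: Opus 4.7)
The plan is to exploit the fact, already noted in the introduction via Kotzig's characterization, that any P-groupoid of order $n$ automatically encodes a decomposition of $K_n$ into edge-disjoint closed cycles: the cycle containing the edge $(a,b)$ continues with the edge $(b,\, a\cdot b)$, and so on. Since Theorem~\ref{DKex} certifies $(\mathbb{Z}_p,\circ)$ as a P-quasigroup, such a cycle decomposition of $K_p$ is handed to us for free; the only remaining task is to verify that each cycle in the decomposition is a Hamiltonian cycle, i.e., has length $p$.

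To do this, I would pick an arbitrary starting edge $(x_0,x_1)$ with $x_0\neq x_1$ and let $x_0, x_1, x_2, \ldots$ enumerate the consecutive vertices along the cycle through it. The defining rule $x_{i-1}\cdot x_i = x_{i+1}$ of the P-groupoid specializes under $\circ$ to the linear recurrence
\[
x_{i+1} \equiv 2x_i - x_{i-1} \pmod{p}.
\]
A one-line induction shows that its unique solution with initial data $x_0,x_1$ is the arithmetic progression $x_i \equiv x_0 + i\,d \pmod{p}$, where $d := x_1 - x_0$. Because $x_0 \neq x_1$ and $p$ is prime, $d$ is a unit in $\mathbb{Z}_p$, so the values $x_0, x_0+d, x_0+2d, \ldots$ are pairwise distinct for $0\le i<p$ and first satisfy $x_i=x_0$ (and simultaneously $x_{i+1}=x_1$) at $i=p$. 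Consequently the cycle closes after exactly $p$ steps and visits every vertex of $K_p$.

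A short edge count then confirms the whole decomposition is Hamiltonian in the sense of the introduction: $K_p$ has $p(p-1)/2$ edges, each Hamiltonian cycle contributes $p$ edges, and so the number of color classes is $(p-1)/2$, matching $k=(p-1)/2$ in the definition of a Hamiltonian decomposition of $K_{2k+1}$. There is no real obstacle in the argument; the single substantive step is recognizing the recurrence $x_{i+1}=2x_i-x_{i-1}$ as producing an arithmetic progression, after which primality of $p$ does all the remaining work. If $p$ were merely odd but composite, a common divisor of $d$ and $p$ would produce cycles strictly shorter than $p$, so primality is essential precisely at the step where one concludes that a nonzero common difference generates all of $\mathbb{Z}_p$.
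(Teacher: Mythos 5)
Your argument is correct, and in fact the paper itself offers no proof of this statement at all: it is quoted from D\'{e}nes and Keedwell \cite{DK} as a citation, so there is nothing internal to compare against. Your reduction is exactly the right one: Kotzig's correspondence (stated in the paper's introduction) already guarantees that the P-quasigroup $(\mathbb{Z}_p,\circ)$ yields a decomposition of $K_p$ into closed cycles, so the only thing left to verify is that each cycle has length $p$. The recurrence $x_{i+1}=x_{i-1}\circ x_i=2x_i-x_{i-1}$ does collapse to the arithmetic progression $x_i=x_0+i(x_1-x_0)$, and primality of $p$ makes the nonzero common difference a generator of $(\mathbb{Z}_p,+)$, so the trail first closes at step $p$ after visiting every vertex exactly once; the count of $(p-1)/2$ colour classes then matches the definition of a Hamiltonian decomposition of $K_{2k+1}$. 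You also correctly isolate where primality is used, which is consistent with the paper's order-$9$ example showing failure for composite odd $n$. The one point worth making explicit is that distinct cycles are indexed by the unordered pairs $\{d,-d\}$ of nonzero differences (traversing a cycle backwards negates $d$), which is what guarantees the $(p-1)/2$ cycles are genuinely distinct and together exhaust the edge set rather than relying only on the aggregate edge count.
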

Combining Theorems \ref{iso} and \ref{HD}, we quickly derive the following.
\begin{corollary}
Let $p$ be an odd prime and $K_p$ the complete graph on $p$ vertices.  If $Q$ is a P-quasigroup corresponding to a Hamiltonian decomposition of $K_p$, then $Q\cong (\mathbb{Z}_n,\circ)$.  Moreover, $\Mlt_\rho(Q)\cong D_{2p}$.
\label{prime}
\end{corollary}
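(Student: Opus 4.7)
The proof plan is a short chain combining the three cited results. By Theorem~\ref{HD}, the P-quasigroup $(\mathbb{Z}_p,\circ)$ corresponds to a Hamiltonian decomposition of $K_p$, and by hypothesis so does $Q$. If the two resulting Hamiltonian decompositions are isomorphic as edge-colored copies of $K_p$, then Theorem~\ref{iso} yields $Q \cong (\mathbb{Z}_p,\circ)$, and the dihedral statement $\Mlt_\rho(Q)\cong D_{2p}$ is then immediate from Theorem~\ref{dihedral} since $\Mlt_\rho$ is preserved under quasigroup isomorphism. So the whole proof reduces to showing that the Hamiltonian decomposition of $K_p$ is unique up to isomorphism when $p$ is prime.

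For this uniqueness I would exploit the subquasigroup structure of $Q$. Starting from an edge $(x,y)$ in some color class, iterating the P-groupoid operation traces out $x, y, xy, y(xy), (xy)(y(xy)), \ldots$, which is exactly the cycle through $(x,y)$ and also the subquasigroup $\langle x,y\rangle$ of $Q$. Hamiltonicity of every cycle is therefore equivalent to $\langle x,y\rangle = Q$ for all $x\neq y$. Mirroring the counting argument of Theorem~\ref{sub}, $|\langle x,y\rangle|$ should divide $p=|Q|$, so primality leaves only the options $1$ and $p$, and since $xy\notin\{x,y\}$ by axiom (\ref{P2}.2), we must have $\langle x,y\rangle = Q$ automatically. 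Thus any P-quasigroup of prime order $p$ is two-generated in a very strong sense.

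The main obstacle is turning this rigid two-generated property into an explicit identification with $(\mathbb{Z}_p,\circ)$. The cleanest route I see is to show that any such $Q$ must be medial, and then invoke the Toyoda--Bruck description recalled just before Theorem~\ref{dihedral} to write $x\cdot y = f(x) + g(y) + c$ on the unique abelian group of order $p$, namely $\mathbb{Z}_p$. Idempotency then forces $c=0$ and $f+g=\mathrm{id}$, while the axiom $xy=z\Leftrightarrow zy=x$ combined with the involutive right translations of Lemma~\ref{R2} forces $f=-\mathrm{id}$ and $g=2\cdot\mathrm{id}$, recovering exactly $x\cdot y = 2y-x = x\circ y$. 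I expect mediality to be the hardest piece, and it may require a direct combinatorial argument on how successive cycles intersect at common vertices rather than a purely equational manipulation; everything after mediality is a short calculation.
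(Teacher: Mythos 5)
You have correctly isolated the load-bearing step that the paper itself passes over: the paper's entire proof of Corollary~\ref{prime} is the one sentence ``Combining Theorems~\ref{iso} and~\ref{HD}, we quickly derive the following,'' and that combination only yields $Q\cong(\mathbb{Z}_p,\circ)$ if one already knows that, up to isomorphism, there is a unique Hamiltonian decomposition of $K_p$ arising from a P-quasigroup. Your first paragraph is exactly the paper's argument (plus the correct remark that $\Mlt_\rho$ is an isomorphism invariant, so Theorem~\ref{dihedral} transports to $Q$), together with an honest acknowledgment that the uniqueness statement is what remains. The trouble is that you then do not prove it. Your proposed route --- show any such $Q$ is medial, then run the Toyoda--Bruck normal form and pin down $f=-\mathrm{id}$, $g=2\cdot\mathrm{id}$ --- is a reasonable plan, but mediality \emph{is} the entire mathematical content of the missing step, and you explicitly defer it (``I expect mediality to be the hardest piece\ldots''). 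As written, the proposal establishes nothing beyond what the formal combination of the cited theorems already gives, so there is a genuine gap; it is the same gap the paper leaves, only you have named it.

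Two smaller points. Your appeal to ``the counting argument of Theorem~\ref{sub}'' to get that $|\langle x,y\rangle|$ divides $p$ is misplaced: that argument is specific to $(\mathbb{Z}_n,\circ)$ (it uses $x\circ(x+k)=x+2k$ to show the subquasigroup is an arithmetic progression), and the paper itself notes that subquasigroup orders need not divide the order of a general quasigroup. You do not need divisibility anyway: if the cycle through the edge $(x,y)$ is Hamiltonian, its vertex set is all of $Q$, so $\langle x,y\rangle=Q$ immediately, which is your two-generation claim with no primality required. Finally, the post-mediality calculation should be checked against idempotency more carefully: $x\cdot x=f(x)+g(x)+c=x$ forces $f+g=\mathrm{id}$ and $c=0$ as you say, but you still need an argument (e.g.\ from $|R_x|=2$, i.e.\ $f^2=\mathrm{id}$ with $f\neq\mathrm{id}$ on $\mathbb{Z}_p$) to exclude other involutive choices; that part is short, as you predict, but it is not yet written down.
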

The following is motivated by Theorem \ref{sub}.
\begin{theorem}
Let $K_n$ have a Hamiltonian decomposition and let $Q$ be the corresponding P-groupoid.  Then $Q$ doesn't contain any nontrivial subgroupoids.
\end{theorem}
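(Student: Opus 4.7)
The plan is to show that any subgroupoid $H \leq Q$ with $|H| \geq 2$ must equal $Q$, so the only subgroupoids of $Q$ are the singletons $\{x\}$ (which are subgroupoids by idempotence, axiom (\ref{pgroup}.1)) and $Q$ itself. First I would pick distinct $x, y \in H$ and invoke the correspondence between $Q$ and the decomposition of $K_n$: the edge $(x,y)$ lies in exactly one color class, and by hypothesis this class is a Hamiltonian cycle $C$ visiting every vertex of $K_n$.

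Next I would write $C$ cyclically as $v_0, v_1, \ldots, v_{n-1}, v_0$ with $x = v_0$ and $y = v_1$. By the defining rule of the P-groupoid, $xy$ is the unique vertex $z$ such that $(x,y)$ and $(y,z)$ lie in the same cycle; in a Hamiltonian decomposition this forces $z$ to be the forward neighbor of $y$ along $C$, namely $v_2$. Thus $v_2 \in H$. Since $v_1, v_2$ are distinct elements of $H$, the same argument gives $v_1 v_2 = v_3 \in H$, and a straightforward induction yields $v_k \in H$ for every $k$. Because $C$ is Hamiltonian, $\{v_0, \ldots, v_{n-1}\} = Q$, whence $H = Q$, contradicting nontriviality.

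The argument is essentially a direct unpacking of the definitions, so I do not foresee any real obstacle. The only point requiring care is the bookkeeping in the induction step: consecutive vertices of a cycle are automatically distinct, so axiom (\ref{pgroup}.2) applies and each product $v_{k-1} v_k$ is genuinely a new vertex; and the P-groupoid axiom $ab = c \iff (a,b),(b,c)$ share a cycle really does pick out the forward neighbor of $v_k$ along $C$ rather than doubling back to $v_{k-1}$, precisely because each vertex has exactly two neighbors in its color class. Once this is observed the iterated multiplication marches around the entire Hamiltonian cycle, delivering $H = Q$.
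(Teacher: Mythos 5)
Your proof is correct and is essentially the paper's argument made precise: both rest on the observation that, for consecutive vertices of a color class, the product $v_{k-1}v_k=v_{k+1}$ (forced by axiom (\ref{pgroup}.2), since the only other neighbor of $v_k$ in its cycle is $v_{k-1}$), so closure of $H$ marches the products around the entire Hamiltonian cycle. The paper phrases this as ``a proper closed $H$ would trap the cycle in fewer than $n$ vertices,'' while you run the same walk forward to conclude $H=Q$; yours is simply the more carefully detailed version.
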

\begin{proof}
Let $|Q|=n$ correspond to a complete graph $K_n$ with a Hamiltonian decomposition. For the sake of contradiction, suppose $\exists H<Q$ where $|H|>1$. Since $H$ is a subgroupoid, $H$ is closed and multiplying the elements of $H$ will create a cycle with length less than $n$. However, this contradicts our assumption that $K_n$ has a Hamiltonian decomposition. Therefore, we conclude that $Q$ doesn't contain any subgroupoids with order greater than 1.
\end{proof}

The following example illustrates Theorem 21. In particular, if \(n\) is not prime, then the corresponding algebraic structure is no longer guaranteed to be a P-quasigroup, but is still a P-groupoid.

\begin{example}
A P-groupoid of order 9.
\begin{figure}[h!]
\parbox[s]{8cm}{
\null
\begin{tikzpicture}
\node[draw,circle,fill] (1) at (0.,5.6712818196177075) {};
\node[draw,circle,fill] (2) at (1.8793852415718169,4.987241532966371) {};
\node[draw,circle,fill] (3) at (2.879385241571816,3.255190725397493) {};
\node[draw,circle,fill] (4) at (2.5320888862379554,1.285575219373078) {};
\node[draw,circle,fill] (5) at (1,0) {};
\node[draw,circle,fill] (6) at (-1,0) {};
\node[draw,circle,fill] (7) at (-2.5320888862379562,1.2855752193730794) {};
\node[draw,circle,fill] (8) at (-2.879385241571816,3.2551907253974957) {};
\node[draw,circle,fill] (9) at (-1.8793852415718155,4.987241532966372) {};
\node [above=.25cm] at (1) {1};
\node [above right=.175cm] at (2) {2};
\node [right=.2cm] at (3) {3};
\node [below right=.175cm] at (4) {4};
\node [below=.2cm] at (5) {5};
\node [below=.2cm] at (6) {6};
\node [below left=.175cm] at (7) {7};
\node [left=.2cm] at (8) {8};
\node [above left=.175cm] at (9) {9};

\begin{pgfonlayer}{background}
\draw[] (1) -- (2) -- (3) -- (4) -- (5) -- (6) -- (7) -- (8) -- (9) -- (1);
\draw[densely dashed] (1) -- (3) -- (5) -- (2) -- (7) -- (4) -- (9) -- (6) -- (8) -- (1);
\draw[dotted] (1) -- (4) -- (2) -- (6) -- (3) -- (8) -- (5) -- (9) -- (7) -- (1);
\draw[loosely dashdotted] (1) -- (5) -- (7) -- (3) -- (9) -- (2) -- (8) -- (4) -- (6) -- (1);
\end{pgfonlayer}
\end{tikzpicture}
}
\parbox[s]{5cm}{
\null
\centering
\begin{tabular}{r|rrrrrrrrr}
$K_9$ & 1 & 2 & 3 & 4 & 5 & 6 & 7 & 8 & 9\\
\hline
1 & 1 & 3 & 5 & 2 & 7 & 4 & 9 & 6 & 8 \\
2 & 9 & 2 & 4 & 1 & 3 & 3 & 4 & 4 & 3 \\
3 & 8 & 1 & 3 & 5 & 2 & 2 & 5 & 5 & 2 \\
4 & 7 & 6 & 2 & 4 & 6 & 1 & 2 & 2 & 6 \\
5 & 6 & 7 & 1 & 3 & 5 & 7 & 3 & 3 & 7 \\
6 & 5 & 4 & 8 & 8 & 4 & 6 & 8 & 1 & 4 \\
7 & 4 & 5 & 9 & 9 & 1 & 5 & 7 & 9 & 5 \\
8 & 3 & 9 & 6 & 6 & 9 & 9 & 6 & 8 & 1 \\
9 & 2 & 8 & 7 & 7 & 8 & 8 & 1 & 7 & 9
\end{tabular}
}
\end{figure}

\end{example}

Further work would consist of finding all necessary and sufficient conditions such that a P-groupoid of odd nonprime order corresponds to a Hamiltonian decomposition of a complete graph.  Hilton gave necessary and sufficient conditions for a Hamiltonian decomposition of $K_{2n+1}$ corresponding to a Hamiltonian circuit \cite{hilton84}.  The proof relies heavily on Hall's work with completing partial Latin squares \cite{hall60}.  Thus, using P-groups to classify Hamiltonian decompositions is a natural choice.  Moreover, due to the connection to quandles in the prime order case, perhaps finding a relationship between P-groupoids and quandles could lead to new results in both fields.

\begin{acknowledgment}
Some investigations in this paper were assisted by the automated deduction tool, \textsc{Prover9}, and the finite model builder, \textsc{Mace4}, both developed by McCune \cite{mccune09}.  Similarly, all presented examples were found and verified using the GAP system \cite{GAP} together with the LOOPS package \cite{GAPNV}.  
\end{acknowledgment}

\bibliographystyle{amsplain}

\end{document}